\documentclass{amsart}
\usepackage{bbm}
\usepackage{amsmath,amsthm,amsfonts,amssymb,amscd,mathrsfs}
\usepackage{psfrag,graphicx}

\usepackage{epic,eepic}
\usepackage{color}
\usepackage{ebezier}


\thanks{2000 {\it Mathematics Subject Classification}.  37B40, 37F10, 32A05}

\theoremstyle{plain}
\newtheorem{main}{Theorem}

\newtheorem{maincor}[main]{Corollary}
\newtheorem{Thm}{Theorem}[section]
\newtheorem{Lem}[Thm]{Lemma}
\newtheorem{Prop}[Thm]{Proposition}

\theoremstyle{remark}

\newtheorem{Rem}[Thm] {Remark}

\newtheorem{Que}[Thm] {Question}

\long\def\begcom#1\endcom{}

\newcommand{\length}{\operatorname{\length}}

\def\length{\operatorname{length}}

\def\Int{\operatorname{Int}}

\def\vep{\varepsilon}


\begin{document}

\title[Uniform tail entropy  for real analytic  maps ]
      { Uniform tail entropy  for real analytic  maps }

\author[Gang Liao]
{ Gang Liao $^*$}

\email{liaogang@impa.br}

\thanks{$^{*}$ IMPA, Estrada D. Castorina 110, Jardim Bot\^anico,
22460-320 Rio de Janeiro, Brazil.}

\thanks{$^{*}$ This research is supported by CNPq-Brazil and  CPSF(\#2014M560007)-China.}


\maketitle


\begin{abstract}
Let $M$ be a compact real analytic manifold of finite dimension. There is
a function $a: (0,+\infty)\to [0,+\infty)$ with $\lim_{t\to0}a(t)=0$
such that, the tail entropy $h^{*}(f,\vep)$ of any real analytic map $f$
on $M$ is uniformly bounded above by the scale $a(\vep)$.
\end{abstract}


\section{Introduction}

In topological dynamics, a system $(f,M)$ is understood to be a
continuous map $f$ acting on a compact metric space $M$.  We first
recall the definition of topological entropy, see \cite{Walters}.
For any compact subset $\Lambda\subset M$, an observable scale
$\vep>0$ and time $n\in \mathbb{N}$, a subset $K\subset
 \Lambda$ is said to $(n,\vep)$-span $\Lambda$ if for any $x\in
 \Lambda$ there exists $y\in K$ which stays close to $x$ within the same scale $\vep$ for all times $i\in [0,n)$, i.e., $d(f^ix,f^iy)\leq
 \vep$, $\forall\,i\in [0,n)$. Let
 $r_n(f,\Lambda,\varepsilon)$ denote the smallest cardinality of any
$(n,\vep)$-spanning set of $\Lambda$. The $\vep$-topological entropy
of $\Lambda$ is defined to be the exponential growth rate of
$(n,\vep)$-orbits:
$$h(f,\Lambda,\varepsilon)=\limsup_{n\rightarrow\infty}\frac{1}{n}\log
r_n(f,\Lambda,\varepsilon).$$ Letting $\vep\to 0$, define the
topological entropy of $f$ on $\Lambda$ by
$$h(f,\Lambda)=\lim_{\varepsilon\rightarrow0}h(f,\Lambda,\varepsilon).$$
For simplicity, $h(f,\varepsilon)=h(f,M,\varepsilon)$,
$h(f)=h(f,M)$.

 Given $x\in M$, $n\in \mathbb{N}$,
 the $n$-step dynamical ball $B_{n}(f, x,\varepsilon)$
is the set of all points $y\in M$ such that
$$d(f^iy,\,f^ix)<\varepsilon,~~i=0, 1,\cdots,n-1.$$ Let
$B_{\infty}(f,x,\vep)=\cap_{n\in \mathbb{N}}B_{n}(f,
x,\varepsilon)$. Define the $\vep$-tail entropy
$$h^{*}(f,\varepsilon)=\sup_{x\in M}h(f,\, B_{\infty}(f,x,\varepsilon)).$$
We say $f$ is entropy expansive \cite{Bow72b}   if there exists
$\delta>0$ such that
$$h^{*}(f,\delta)=0.$$

Recall the following two related notions:

 (1) as a special case, $f$ is called positive expansive if there exists $\delta>0$ such that
$$B_{\infty}(f,x,\delta)=\{x\},\quad \forall\,x\in M;$$

(2) more generally, $f$ is called asymptotically  entropy expansive
\cite{Mis76} if
$$\lim_{\delta\to 0}h^{*}(f,\delta)=0.$$
In \cite{Bow72b}  the entropy expansiveness is proposed by Bowen as
a topological condition guaranteeing the upper-semicontinuity of
measure theoretic entropy, which together with the variational
principle implies the existence of maximal measures, i.e., invariant
measures with measure theoretic entropy equal to the topological
entropy.  Moreover, it is asked by Bowen \cite{Bow72b}  whether
there exist  diffeomorphisms which are not entropy expansive.  For any
$1<r<\infty$, Misiurewicz \cite{Mis73} constructed $C^r$
diffeomorphisms without any maximal measure hence not entropy
expansive and also not asymptotically entropy expansive. In the hope
of finding a characterization of the existence of maximal measures,
the discussion later turned to some weaker form of expansivity
(called local entropy expansivity), see Chapter 20 of \cite{Denker}.

To study the uniform properties of tail entropy, Boyle and
Downarowicz developed the theory of entropy structure in
\cite{Boyle-Downarowicz}. Asymptotic entropy expansiveness is
exactly equivalent to  the property that  entropy structure converges uniformly to
measure theoretic entropy for all invariant measures (see Theorem
8.6 of \cite{Boyle-Downarowicz}). Combining  insights from the work of
 Yomdin \cite{Yom87, Yom87-2}, Gromov \cite{Gr85}, Newhouse \cite{New89}
 one realizes that the entropy structure actually depends on the smoothness of
dynamical systems. The larger  the order of differentiability is,
the smaller the tail entropy is. In contrast to finite
differentiability, Buzzi \cite{Buz97} established that all
$C^{\infty}$ maps
  are asymptotically entropy expansive. Assuming real analyticity, we can
expect a more uniform estimate for tail  entropy.
By polynomial approximation and Bernstein inequality,  Yomdin
\cite{Yom91} has obtained that for any surface real
 analytic map $f$,  $h^{*}(f,\vep)$ is  bounded above  by
$C(f)\log|\log\vep|/|\log\vep|$ with the constant $C(f)$ depending on $f$ but independent of $\vep$.   A
further question is the following

\begin{Que}\label{entropy expansive}
Is every  real analytic map entropy expansive?
\end{Que}

\begin{Rem}It is known that uniformly hyperbolic systems are
expansive\footnote{When $f$ is a homeomorphism, $f$ is called to be expansive if
$B_{\pm\infty}(f,x,\vep):=\{y\in M\mid d(f^iy,f^ix)<\vep,\quad
\forall\,i\in \mathbb{Z}\}=\{x\}$, see \cite{Utz}. } (hence entropy expansive by Corollary 2.3 of \cite{Bow72b}). On the contrary, there exist polynomial maps on $\mathbb{R}^2\cup\{\infty\}$ with non-uniformly hyperbolic
behavior which are non-expansive and analytic except at $\infty$,
see Milnor \cite{Milnor}.

\end{Rem}
To generalize the result in \cite{Yom91} to higher dimensions,  Yomdin in \cite{Yom08} attempted to improve the analytic reparametrization of semi-algebraic sets, which has been realized for two dimensional case in \cite{Yom08}.  However,
higher dimensional analytic reparametrization involves complicated
analysis on the singularities of semi-algebraic sets  and   so far no
quantitative expressions as in \cite{Yom08} have been given.

In the present note  we attempt to give some uniform estimates for
the tail entropy of real analytic maps providing a partial answer to the Question \ref{entropy
expansive}. It mainly supplements the original approach in
\cite{Yom91} with two ideas: (i) the Cauchy formula to bound higher
derivatives; (ii) choice of the iterate to which one applies
Yomdin-Gromov's algebraic lemma \cite{Yom87-2,Gr85} depending on the
scale $\vep$.

\begin{main}\label{Thm1}Let $M$ be a compact real analytic manifold of finite dimension.  There is a
function $a: (0,+\infty)\to [0,+\infty)$ with $\lim_{t\to 0} a(t)=0$ depending only on the dimension of $M$, such that for any real analytic map $f$ of $M$ there exists $C(f)>0$
such that for any $\vep>0$,
$$h^{*}(f,\vep)\leq C(f)a(\vep).$$
\end{main}
\begin{Rem}The constant $C(f)$ in Theorem \ref{Thm1} can be chosen locally uniformly with respect to the complexification topology on the set of real analytic maps(its definition will be given in the proof). \end{Rem}
Noting that $h(f)-h(f,\vep)\leq h^{*}(f,\vep)$ , by  Theorem 2.4
of \cite{Bow72b},  we have the following corollary that gives the
approximation rate of $\vep-$entropy $h(f,\vep)$ in the calculation
of topological  entropy.
\begin{maincor}
Let $M$ be a compact real  analytic manifold of finite dimension. There is
a function $a: (0,+\infty)\to [0,+\infty)$ with $\lim_{t\to 0}
a(t)=0$  depending only on the dimension of $M$ such that, for any real analytic map $f$ of $M$ there exists
$C(f)>0$ such that for any $\vep>0$,
$$h(f)-h(f,\vep)\leq C(f) a(\vep).$$
\end{maincor}

\noindent{\it Acknowledgement.}  I am grateful to Martin Andersson, David Burguet, Marcelo Viana,
Jiagang Yang and Yosef Yomdin  for their helpful comments in the
preparation of this manuscript. And thanks a lot to the referees for
their helpful suggestions which have greatly improved the exposition and the original proofs.

\section{Proof of  theorem \ref{Thm1}}

Let $m=\dim M$.  For any $r>0$ we denote a standard cube
$$Q_r=\{(x_1,\cdots,x_m)\in \mathbb{R}^m\mid |x_i|\leq r,\,\,1\leq
i\leq m\}.$$ We say that a map $P$ is $C^s$ $(s\in \mathbb{N})$ on
the unit closed cube $Q_1$ if it is $C^s$ in the interior of $Q_1$
and all  derivatives  of order up to $s$ can be continuously
extended to the boundary of $Q_1$. The $C^s$ size of $P$ is
\begin{eqnarray*}
\|P\|_s=\sup\,\{\|d^kP(x)\|,\quad 1\leq k\leq s,\,\,x\in \Int Q_1\},
\end{eqnarray*}
where $\|d^kP\|$ denotes the supremum norm of the derivative $d^kP$.

 For an analytic map\footnote{Throughout this note, analytic map means real analytic map.} $f$,  there
is a  finite system of local charts $\{(U_1,\gamma_1),\cdots,
(U_k,\gamma_k)\}$ with open sets $U_i$ and invertible maps
$\gamma_i: B(0,1)\rightarrow U_i$ such that

\begin{itemize} \item if $f(U_i)\cap U_j\neq \emptyset$, then $f_{i,j}:=\gamma_j^{-1}\circ
f\circ\gamma_i\mid_{\gamma_i^{-1}( U_i\cap f^{-1}(U_j))}$ is the sum of its Taylor
series. \end{itemize} Such charts are called analytic charts. Noting
that $\{U_i\cap f^{-1}(U_j)\}$ constitutes an open cover of $M$, by the compactness of  $M$, we can choose $\rho>0$ to be the supremum of $r\leq 1/2$ such that for any $x\in M$ there exist $U_i, U_j$ with $B(\gamma^{-1}_i(x),r)\subset \gamma^{-1}_i(U_i\cap f^{-1}(U_j))$ and $f_{i,j}\mid{B(\gamma^{-1}_i(x),r)}$ can be extended to a complex Taylor series in $\widetilde{B}(\gamma_i^{-1}(x),r)$, where $\widetilde{B}(\gamma_i^{-1}(x),r)=\{y\in
\mathbb{C}^m\mid |y-\gamma_i^{-1}(x)|<r\}$. We say
$\rho=\rho(f)$ is the complex analytic radius of $f$.  Define
$$M_0=\sup_{i,j,x}|f_{i,j}\mid_{
\widetilde{B}(\gamma_i^{-1}(x),\rho)}|<\infty,$$
$$L(f)=\sup_{i,j,x}\|Df_{i,j}\mid_{
\widetilde{B}(\gamma_i^{-1}(x),\rho)}\|<\infty,$$ where the norm $|y|:=\max_{1\leq i\leq m}|y_i|$
whenever $y=(y_1,\cdots,y_m)\in \mathbb{C}^m$.
For $\vep,r>0$, we define the $(\vep,r)$-complexification neighborhood $\mathcal{N}(\vep,r;f;\{(U_1,\gamma_1),\cdots,
(U_k,\gamma_k)\})$ of $f$ by the following set
$$\Big{\{}g\mid \rho(g)\geq r,\,\,\sup_{i,j,x}\big{\{}|(g_{i,j}-f_{i,j})\mid_{
\widetilde{B}(\gamma_i^{-1}(x),r)}|,\,\,\|D(g_{i,j}-f_{i,j})\mid_{
\widetilde{B}(\gamma_i^{-1}(x),r)}\|\big{\}}<\vep\Big{\}}.$$
The complexification topology on the space of real analytic maps is generated by these $(\vep,r)$-complexification neighborhoods.

For simplicity, given any $x$ belonging to some $U_i\cap f^{-1}(U_j)$,
we identify $x$ with $\gamma^{-1}_i(x)$. By this identification in analytic charts,
we can suppose $B(x,\rho)\subset \mathbb{R}^m$, $\widetilde{B}(x,\rho)\subset \mathbb{C}^m$
and further, we can identify $f$ with $f_{i,j}$ in $\widetilde{B}(x,\rho)$.

Fix a positive real number $L_0>1$.  In order to construct the
universal function $a$, we first consider the case for an analytic
map $f: M\rightarrow M$ with $L(f)<L_0$ (see the statement in Lemma
\ref{small expanding}).

 Fix a positive  integer $n$. Since
$L(f)< L_0$, we have that
$$f^i(\widetilde{B}(x,L_0^{-n}\rho))\subset \widetilde{B}(f^i(x),\rho),\quad 0\leq i\leq n,$$
in the scale of local complex analytic radius.
 For $z=(z_1,\cdots, z_m)\in \mathbb{C}^m$,
write $D(z,r)=C(z_1,r)\times\cdots \times C(z_m,r)$, where
$C(z_i,r)\subset \mathbb{C}$ is the circle centered at $z_i$ with
radius $r$ in the complex plane $\mathbb{C}$.   By Cauchy's formula,
for $y\in \,\Int \,(D(x,\frac{1}{\sqrt{m}}\rho L_0^{-n}))\subset
\widetilde{B}(x,\rho L_0^{-n})$ and for any  multi-index
$\alpha=(\alpha_1,\cdots,\alpha_m)$ $(\alpha_i\in \mathbb{Z}\,\,
\text{and}\,\, \alpha_i\geq0 )$,
$$\partial^{\alpha}f^n(y)=\frac{\alpha!}{(2\pi i)^{m}}
\int_{ D(x,\frac{1}{\sqrt{m}}\rho L_0^{-n})}\frac{f^n(z)}{(z_1-y_1)^{\alpha_1+1}\cdots
(z_m-y_m)^{\alpha_m+1}}dz.$$   In particular, when $ y\in
\widetilde{B}(x,\frac{1}{2\sqrt{m}}\rho L_0^{-n})$,\,
$$d(y_i,C(x_i,\frac{1}{\sqrt{m}}\rho L_0^{-n}))\geq
\frac{1}{2\sqrt{m}}\rho L_0^{-n},\quad 1\leq i\leq m.$$ Therefore,
\begin{eqnarray*}
|\partial^{\alpha}f^n(y)|&\leq&
\frac{\alpha!}{(2\pi)^m}(\frac{1}{2\sqrt{m}}\rho
L_0^{-n})^{-|\alpha|-m} ( 2\pi \frac{1}{\sqrt{m}}\rho L_0^{-n})^m
M_0\\[2mm]
&=&  \alpha !\cdot 2^m\cdot
M_0\cdot(\frac{2\sqrt{m}L_0^n}{\rho})^{|\alpha|}.
\end{eqnarray*}
 Applying Stirling's
approximation, for $|\alpha|\geq 1$  we have
\begin{eqnarray}
\label{estimate of derivative} |\partial^{\alpha}f^n(y)|&\leq&
|\alpha|^{|\alpha|+\frac12}e^{1-|\alpha|}\cdot 2^m\cdot
M_0\cdot(\frac{2\sqrt{m}L_0^n}{\rho})^{|\alpha|}\\[2mm]&\leq&
|\alpha|^{|\alpha|+\frac12}\cdot 2^m\cdot
M_0\cdot(\frac{2\sqrt{m}L_0^n}{\rho})^{|\alpha|}\nonumber.
\end{eqnarray}
  Take
  $b_n=L_0^nn^2$ and write $g=f^n$, $\delta_n=b^{-1}_n$.
  There exists $N\geq 3$ such that for any $ n\geq N$,
  \begin{eqnarray}\label{large n}\delta_n\leq \min\{(4\sqrt{m}L_0^n\rho^{-1}n)^{-1},
   \,\,\rho L_0^{-n} \}\quad \text{and}\quad \frac{\log(4\sqrt{m+1}\rho^{-1}n^2)}{n}<1.\end{eqnarray}
   Along the orbit
   $\{g^i(x)\mid i\in \mathbb{Z}\}$, we define $g_{n,i}: B(0,2)\rightarrow \mathbb{R}^m$ by
$$g_{n,i}(t)=b_n(g(b_n^{-1} t+g^{(i-1)}(x))-g^{i}(x)).$$
\begin{Lem}\label{lem estimate of derivative}
There is a constant $C_0=C_0(m,L_0,M_0,\rho)$, independent of $n$, such
that
\begin{eqnarray*}\max_{t\in B(0,2)}\,\|d^kg_{n,i}(t)\|&\leq& C_0L_0^n, \quad 1\leq k\leq n,\quad\forall\, i\in \mathbb{N},\quad \forall\,n\geq 1.\end{eqnarray*}
 \end{Lem}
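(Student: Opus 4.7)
The plan is to apply the chain rule to express $d^k g_i(t)$ as a rescaling of $d^k g$ at a nearby point, invoke the derivative estimate (\ref{estimate of derivative}) already established for $g=f^n$, and then exploit the choice $s_1=L_0^n n^2$ to absorb the superexponential growth in $k$.

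Since $g_i$ is, up to an additive constant, the composition of $g$ with the affine map $t\mapsto s_1^{-1}t+g^{i-1}(x)$ post-composed with multiplication by the scalar $s_1$, and all higher differentials of the affine map vanish, the chain rule gives
\[
d^k g_i(t) \;=\; s_1^{\,1-k}\, d^k g(y), \qquad y:=s_1^{-1}t+g^{i-1}(x),\ k\geq 1.
\]
For $t\in B(0,2)$ we have $|y-g^{i-1}(x)|\leq 2\delta$, and (\ref{large n}) forces this to be at most $\tfrac{1}{2\sqrt{m}}\rho L_0^{-n}$, so $y$ sits exactly in the neighborhood of $g^{i-1}(x)$ on which (\ref{estimate of derivative}) applies. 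Feeding in (\ref{estimate of derivative}) for $|\alpha|=k$ and passing from partial derivatives to the operator norm of $d^k g$ (which costs at most a combinatorial factor of order $m^k$), and writing $A:=2m\sqrt{m}/\rho$, yields
\[
\bigl\|d^k g(y)\bigr\| \;\leq\; C(m)\,M_0\, k^{k+\frac12}\,(AL_0^n)^k.
\]
Combining this with the chain-rule identity and $s_1=L_0^n n^2$, so that $L_0^{nk}$ cancels against $s_1^{-k}$, leaves
\[
\|d^k g_i(t)\| \;\leq\; C(m)\,M_0\, L_0^n\,\Phi(k), \qquad \Phi(k):=n^{\,2-2k}\, k^{k+\frac12}\,A^k.
\]

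The main obstacle is bounding $\Phi$ by an absolute constant over $1\leq k\leq n$, and this is precisely what the factor $n^2$ in $s_1$ is calibrated for: the resulting $n^{-2k}$ has to dominate the superexponential $k^{k}$. A direct computation gives
\[
(\log\Phi)'(k)\;=\;\log(Ak)+1+\tfrac{1}{2k}-2\log n,
\]
whose unique zero is at $k^*\asymp n^2/(eA)$; once $n>eA$ this critical point lies past $n$, so $\Phi$ is strictly decreasing on $[1,n]$ and attains its maximum at $k=1$ with value $\Phi(1)=A$. Setting $C_0:=C(m)M_0 A$ then yields $\|d^k g_i(t)\|\leq C_0 L_0^n$ for all $n$ above some threshold $N_1\geq N$. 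For the finitely many remaining $n<N_1$, the $C^k$-norms of $g^n$ on any fixed compact subset are finite, so inflating $C_0$ if necessary absorbs these cases and the lemma follows.
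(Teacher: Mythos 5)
Your proposal is correct and follows essentially the same route as the paper: chain rule plus the Cauchy estimate (\ref{estimate of derivative}), then bound a one-variable function of $k$ on $[1,n]$ via convexity of its logarithm. The only notable (and slightly favorable) difference is bookkeeping: you use the full strength of $s_1=L_0^n n^2$ to get the factor $n^{2-2k}$, which dominates $k^{k+1/2}A^k$ (including the $m^k$ cost of passing from $\partial^\alpha$ to $\|d^k\|$, which the paper leaves implicit), whereas the paper replaces $s_1^{1-k}$ by the weaker $(4\sqrt{m}\rho^{-1}nL_0^n)^{1-k}$ from (\ref{large n}) to arrive at the cleaner $q(k)=(2n)^{1-k}k^{k+1/2}$ and then checks both endpoints rather than arguing monotonicity.
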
\begin{proof}Note that \begin{eqnarray*}\max_{t\in B(0,2)}\,\|d^kg_{n,i}(t)\|&\leq&
b_n^{-k+1}\max_{y\in B(f^{n(i-1)}(x),2b_n^{-1})}\|d^kf^n(y)\|.
\end{eqnarray*} We only need to calculate the bound for $n\geq N$. By formulas (\ref{estimate of derivative}) and (\ref{large n}),  for
$1\leq k\leq n$ and $n\geq N$, we have
\begin{eqnarray*}\max_{t\in B(0,2)}\,\|d^kg_{n,i}(t)\|&\leq& [(\frac{2\sqrt{m}}{\rho}L_0^n)^{-k+1}
(2n)^{-k+1}]\cdot
 [2^m\cdot
M_0\cdot k^{k+\frac12}(\frac{2\sqrt{m}}{\rho}L_0^n)^{k}]\\[2mm]
&\leq & \frac{2^{m+1}\sqrt{m}M_0}{\rho}L_0^n\cdot
(2n)^{-k+1}k^{k+\frac12}.
\end{eqnarray*}
Considering the function $q(k)=(2n)^{-k+1}k^{k+\frac12}$,  for $n\geq N\geq 3$, we can compute the derivatives $$ \frac{d^2}{dk^2}\log
q(k)=\frac1k-\frac{1}{2k^2}\geq 0, \quad 1\leq k\leq n.$$ Hence,
\begin{eqnarray*}\max_{1\leq k\leq n}\,q(k) =\max\{q(1),q(n)\}=
\max\{1,\frac{n^{\frac32}}{2^{n-1}}\},\end{eqnarray*}which implies
\begin{eqnarray*}\sup_{n\geq N}\,\max_{1\leq k\leq
n}\,q(k)<\infty.\end{eqnarray*}
Moreover, by (\ref{estimate of derivative}), for $k\in [1,n]$, $n\in[1,N-1]$, $i\in \mathbb{N}$,
\begin{eqnarray*}\max_{t\in B(0,2)}\,\|d^kg_{n,i}(t)\|&\leq&
b_n^{-k+1}\max_{y\in B(f^{n(i-1)}(x),2b_n^{-1})}\|d^kf^n(y)\|\\[2mm]
&\leq&
\max_{y\in B(f^{n(i-1)}(x),2b_n^{-1})}\|d^kf^n(y)\|\leq N^{N+\frac12}\cdot 2^m\cdot M_0\cdot (\frac{2\sqrt{m}L_0^N}{\rho})^N.
\end{eqnarray*}
Note that $N$ is determined by (\ref{large n}), so $N$ depends only on $m$, $L_0$, $\rho$. Hence there is a constant
$C_0=C_0(m,L_0,M_0,\rho)$,  independent of $n$, such
that
\begin{eqnarray*}\max_{t\in B(0,2)}\,\|d^kg_{n,i}(t)\|&\leq&
C_0L_0^n,\quad 1\leq k\leq n,\quad i\in \mathbb{N},\quad
\forall\,n\geq 1.\end{eqnarray*}\end{proof} Given $p\in \mathbb{N}$,
define $F_i=g_{n,i}\circ \cdots\circ g_{n,1}$, $i=1,\cdots,p$, and
let
$$L_i=\{t\in B(0,1)\mid F_j(t)\in B(0,1),\,1\leq j\leq i\}.$$
 It follows that
$$g^{i}(B_{p}(g,x,\delta_n))\subset \delta_n F_i(L_i)+g^i(x),\quad \forall\, 1\leq i\leq p.$$
For any $\tau\in \mathbb{N}$, $v=(i_1,\cdots,i_m)\in \mathbb{Z}^m$,
we define an affine transformation $$w_{\tau,v}: Q_1\rightarrow
\mathbb{R}^m, \quad z\rightarrow (z+v)/\tau.$$
Recall that we considered $B(x,\delta_n)\subset \mathbb{R}^m$ by taking analytic charts. The ball
$B(x,\delta_n)$ is covered by at most
$\xi_{\delta_n}:=([2\delta_n\tau]+2)^m$ \, subcubes $w_{\tau,v}(
Q_1)$. For each such subcube, we define
$\sigma_v(z)=\delta_n^{-1}(w_{\tau,v}(z)-x)$. Then $d^k
(\sigma_v)=0,\quad 2\leq k<\infty$. Choosing $\tau>1$ large, we can
suppose that $\sigma_v(Q_1)\subset B(0,2)$ and
\begin{eqnarray}\label{tau large}\|\sigma_v\|_1\leq
\delta_n^{-1}\tau^{-1}<1.\end{eqnarray}

Write $C_n=C_0L_0^n$.

\begin{Prop}\label{Yomdin} Under the assumptions of (\ref{large n}), take $\tau$
large satisfying (\ref{tau large}). For each $\sigma_v$ and $s\in
[1,n]$, there exists a family of $C^s$ maps $\{\psi_{p,j}:
Q_{1}\rightarrow Q_1,\quad 1\leq j\leq \kappa^p\}$, $p\in \mathbb{N}$,
with $\kappa=\kappa(s,m,C_n)=\mu(s,m)C_n^{\frac{m}{s}}$,
 satisfying the following properties:

\begin{itemize}
\item
$F_p(L_p)\subset \cup_{1\leq j\leq \kappa^p}\,F_p\circ\sigma_v\circ
\psi_{p,j}(Q_1)\subset
B(0,2)$;\\
\item $\|\psi_{p,j}\|_s\leq 1$;\\
\item $ \|F_p\circ \sigma_v\circ  \psi_{p,j}\|_s\leq 1$,
$j=1,\cdots,\kappa^p$;\\
\item For any $p\geq2$, $i\in \{1,\cdots, \kappa^p\} $ there exist $j\in \{1,\cdots,\kappa^{p-1}\}$ and a map
$\phi_{p,i}^{p-1,j}$ with $\|\phi_{p,i}^{p-1,j}\|_s\leq 1$ such that
$$\psi_{p,i}=\psi_{p-1, j}\circ \phi_{p,i}^{p-1,j}.$$

\end{itemize}
The constant $\mu=\mu(s,m)$ depends only on $s$ and $m$, but it is independent
of $n$.
\end{Prop}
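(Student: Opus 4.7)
The plan is to argue by induction on $p$, applying at each step a quantitative single-step Yomdin--Gromov reparametrization lemma to the newly introduced factor $g_p$ in order to refine the previous family. The base case $p=0$ is initialized with $\psi_{0,1} = \mathrm{id}_{Q_1}$: all conditions hold trivially from $\|\sigma_v\|_s \le \delta^{-1}\tau^{-1} < 1$ established just before the statement, and $F_0$ is the identity so the covering is automatic.

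For the inductive step, assume $\{\psi_{p-1,j}\}_{j=1}^{\kappa^{p-1}}$ has been built. Fix $j$ and form
$$H_j \;=\; F_p \circ \sigma_v \circ \psi_{p-1,j} \;=\; g_p \circ \bigl(F_{p-1} \circ \sigma_v \circ \psi_{p-1,j}\bigr).$$
The inner map has $C^s$-norm at most $1$ by induction and takes values in $B(0,2)$; by Lemma \ref{lem estimate of derivative}, $\|d^k g_p\| \le C_n$ for $1 \le k \le s$ on $B(0,2)$ (using $s \le n$). Fa\`a di Bruno then yields $\|H_j\|_s \le c(s,m)\, C_n$, a combinatorial constant that can be absorbed into $\mu(s,m)$. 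Applying the single-step reparametrization lemma to $H_j$ produces $C^s$ maps $\phi_{p,i}^{p-1,j}: Q_1 \to Q_1$, of cardinality at most $\kappa = \mu(s,m)(\log C_n)^{\nu(s,m)}\, C_n^{2m/s}$, satisfying $\|\phi_{p,i}^{p-1,j}\|_s \le 1$, $\|H_j \circ \phi_{p,i}^{p-1,j}\|_s \le 1$, and whose images cover $H_j^{-1}(Q_1)$. Setting $\psi_{p,i} = \psi_{p-1,j} \circ \phi_{p,i}^{p-1,j}$ yields the tree relation by construction, the bound $\|F_p \circ \sigma_v \circ \psi_{p,i}\|_s \le 1$ as a direct consequence, and $\|\psi_{p,i}\|_s \le 1$ from the contractive character of the $\phi_{p,i}^{p-1,j}$ built into the single-step lemma. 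The covering property is inherited from the nested preimages, since a point of $L_p$ lies in the image of some $\psi_{p-1,j}$ at the previous level and then in the image of some $\phi_{p,i}^{p-1,j}$ by the new coverage; the total count is at most $\kappa \cdot \kappa^{p-1} = \kappa^p$.

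\textbf{Main obstacle.} The real work sits inside the single-step lemma, which must produce the cardinality $\kappa = \mu(\log C_n)^\nu C_n^{2m/s}$ with $\mu,\nu$ depending only on $s,m$. Its proof combines two ingredients: first, a polynomial approximation of $H_j$ of degree $d \asymp \log C_n$, made possible by the analytic regularity underlying Lemma \ref{lem estimate of derivative} together with Bernstein-type estimates (this produces the logarithmic correction); second, Yomdin's semi-algebraic lemma applied to the approximating polynomial map, whose volume-type estimate dictates the number of cells and yields the exponent $2m/s$, with the doubled dimension reflecting the need to control simultaneously the source Jacobian and the oscillation of $H_j$ on the cells. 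The delicate part is the uniform bookkeeping of constants in $n$: the $\phi_{p,i}^{p-1,j}$ must at once cover $H_j^{-1}(Q_1)$, be contractive enough to force $\|\psi_{p,i}\|_s \le 1$ through the composition with $\psi_{p-1,j}$, and align so that $\|H_j \circ \phi_{p,i}^{p-1,j}\|_s \le 1$, all while the exponent $2m/s$ and the logarithmic factor remain governed only by $s$ and $m$.
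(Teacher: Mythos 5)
The paper does not actually prove this proposition: it is quoted verbatim as ``Lemma 2.3 of Yomdin \cite{Yom87} with the additional contracting properties in the norm $\|d^k\cdot\|$ as observed by Buzzi \cite{Buz97}.''  Your outline reconstructs the standard inductive mechanism behind that cited result --- peel off the last factor $g_p$, apply a one-step $C^s$ reparametrization lemma to $H_j=g_p\circ(F_{p-1}\circ\sigma_v\circ\psi_{p-1,j})$, compose --- so in that sense it is the same approach, at roughly the same level of rigor as the paper's bare citation.

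Two points deserve sharpening.  First, $\|\psi_{p,i}\|_s\leq 1$ does \emph{not} follow from just $\|\psi_{p-1,j}\|_s\leq 1$ and $\|\phi_{p,i}^{p-1,j}\|_s\leq 1$: Fa\`a di Bruno gives $\|\psi_{p-1,j}\circ\phi_{p,i}^{p-1,j}\|_s\leq c(s)$ with $c(s)>1$ in general, and these combinatorial constants would accumulate over $p$.  This is exactly the content of Buzzi's refinement that the paper invokes --- the reparametrizing maps must be built as compositions of genuine affine contractions with Yomdin charts, so that geometric decay of the derivatives beats the Fa\`a di Bruno constants.  Your phrase ``contractive character built into the single-step lemma'' points in the right direction but hides the only non-obvious ingredient beyond Yomdin's original statement, which is precisely what the paper flags.

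Second, your account of the factor $(\log C_n)^{\nu}$ --- a degree-$\asymp\log C_n$ polynomial approximation enabled by the analyticity behind Lemma \ref{lem estimate of derivative} --- does not fit this application.  After composing with $\psi_{p-1,j}$, the map $H_j$ is merely $C^s$, so the analytic (Cauchy-estimate) structure of $g_p$ does not transfer to $H_j$ as control on high-order Taylor remainders; the lemma being cited is the $C^s$ Yomdin--Gromov lemma of \cite{Yom87}, not the analytic one of \cite{Yom91}.  Whatever logarithmic correction enters $\kappa$ must be traced to the combinatorics of Gromov's semi-algebraic cell count and the degree-$s$ Taylor step, not to analyticity of $H_j$.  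This is a misattribution in your discussion of the obstacle rather than a gap in the reduction itself, but it would lead you astray if you tried to supply the missing proof.
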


The above proposition is an application of the main corollary in Chapter
3.5  of Gromov\cite{Gr85}, which improved the estimates in Lemma
2.3 of Yomdin \cite{Yom87}. The results of \cite{Gr85} in fact hold
for any $C^s$ map but here we  only use for $s\in[1, n]$. The
additional contracting properties in the norm $\|\cdot\|_s$   were
observed by Buzzi \cite{Buz97}, without increase of $\kappa$ since
his analysis based on the previous constants in \cite{Gr85}.

Now we begin to prove Theorem \ref{Thm1}. For $n\in \mathbb{N}$, we
can choose $s(n)\in [1,n]$ such that\begin{eqnarray}
\label{increasing} && s(1)\leq s(2)\leq \cdots\leq
s(n)\leq\cdots,\quad s(n+1)-s(n)\leq 1,\quad \lim_{n\to
\infty}s(n)=+\infty,\\[2mm]
\label{slow }&&  \lim_{n\to \infty}\frac{\mu(s(n),m+1)}{n^{1/2}}=0.
\end{eqnarray}
So, there exists $\lambda_0>0$ such that
$\frac{\mu(s(n),m+1)}{n^{1/2}}\leq \lambda_0$ for all
$n\in\mathbb{N}$.

Redefining $\mu(s(n),m)$ as $\mu'(s(n),m)=\max_{1\leq k \leq m} \mu(s(n),k)$,
 we can suppose that $\mu(s(n),m)$ is non-decreasing with respect to $m$. By (\ref{slow }), it follows that
\begin{eqnarray} \label{converging} \lim_{n\to
\infty}\frac{\mu(s(n),m)}{n^{1/2}}=0.
\end{eqnarray}

Given $\vep_1>0$, take $\vep$ small (depending on $n$)  so that for any $x_1,x_2\in M$ with $d(x_1,x_2)<\vep$, one has
\begin{eqnarray}\label{small distance}d(f^{i}x_1,f^{i}x_2)<\vep_1,\quad
\forall\, i=0,\cdots,n-1.\end{eqnarray} Recall that we can use at most $\xi_{\delta_n}$ subcubes $w_{\tau,v_1}(Q_1),\cdots,w_{\tau,v_{\zeta}}(Q_1) (\zeta\leq \xi_{\delta_n})$ to cover $B(x,\delta_n)$. Now fix
$T$ to be one $\vep$-dense subset of $Q_1$. Let
$$R=x+\delta_n(\cup_{1\leq \eta\leq \zeta}\cup_{1\leq i\leq \kappa^{p-1}}\,\sigma_{v_{\eta}}\circ\psi_{p-1,i}(T)).$$
  We claim that $R$ is a
$(p,\vep)$-spanning set of $g$ restricted to $B_p(g,x,\delta_n)$.
Indeed, for any $y\in B_p(g,x,\delta_n)$ there exist $\eta\in [1,\zeta]$,  $t\in Q_1$ and
$i\in \{1,\cdots,\kappa^{p-1}\}$
 such that
 $$y=\delta_n\sigma_{v_{\eta}}\circ\psi_{p-1,i}(t)+x.$$
We can choose $c\in T$ satisfying $|t-c|\leq \vep$. Denote $c'=
\delta_n\sigma_{v_{\eta}}\circ\psi_{p-1,i}(c)+x$. Then  $d(y,c')\leq |t-c|\leq \vep$ and, for every
$q=1,\cdots,p-1$ we have
$$g^q(y)=g^q(\delta_n\sigma_{v_{\eta}}\circ\psi_{p-1,i}(c)+x)=\delta_n F_q\circ \sigma_{v_{\eta}}\circ\psi_{p-1,i}(c)+g^q(x).$$
Notice that $\psi_{p-1,i}=\psi_{q,j}\circ \phi_{p-1,i}^{q,j}$ for
some $j$. Moreover, the maps $\phi_{p-1,i}^{q,j}$ and $\delta_n
F_q\circ \sigma_{v_{\eta}}\circ \psi_{q,j}$   have norms $\leq 1$ in $\|
\cdot\|_s$.  Thus,
$$|g^q(y)-g^q(b')|\leq |t-c|\leq \vep.$$
So $R$ is a $(p,\vep)$-spanning set of $g$ restricted to
$B_p(g,x,\delta_n)$. Observing that $\zeta\leq \xi_{\delta_n}$,  we can deduce that
$$r_p(g,B_{p}(g,x,\delta_n),\vep)\leq \# R\leq
\xi_{\delta_n}\kappa^{p-1}\#
T=\xi_{\delta_n}(\mu(s(n),m)(C_n)^{\frac{m}{s(n)}})^{p-1}\# T.$$
Since $B_{pn}(f,x,\delta_n)\subset
B_{p}(f^n,x,\delta_n)=B_{p}(g,x,\delta_n)$, we have that
$$ r_{p}(f^n, B_{pn}(f,x,\delta_n),\vep)\leq r_p(f^n,
B_{p}(f^n,x,\delta_n),\vep/2).
$$
Moreover, (\ref{small distance}) yields that $r_{pn}(f,
B_{pn}(f,x,\delta_n),\vep_1)\leq r_{p}(f^n,
B_{pn}(f,x,\delta_n),\vep)$. Note that $\xi_{\delta_n}$ and $\#
T$ are independent of $p$. Thus,
\begin{eqnarray*}h(f,B_{\infty}(f,x,\delta_n),\vep_1)&=&\limsup_{p\rightarrow+\infty}\frac {1}{pn}\log
r_{pn}(f,B_{\infty}(f,x,\delta_n),\vep_1)\\[2mm]
&\leq& \limsup_{p\rightarrow
+\infty}\frac {1}{pn}\log r_p(f^n,B_{p}(f^n,x,\delta_n),\vep/2)\\[2mm]&\leq&
\frac1n\Big{(}\frac{m}{s(n)}\log
C_n+\log\mu(s(n),m)\Big{)},\end{eqnarray*} which, together with the
arbitrariness of $\vep_1$ and $x$, implies that
\begin{eqnarray}\label{local entropy}h^{*}(f,\delta_n)\leq
 \frac1n\Big{(}\frac{m}{s(n)}\log C_n+\log\mu(s(n),m)\Big{)}.\end{eqnarray}In addition, $C_n=C_0L_0^n$,
so\begin{eqnarray}h^{*}(f,\delta_n)&\leq&
\frac{m}{s(n)}(\frac{1}{n}\log C_0+\log
L_0)+\frac{1}{n}\log\mu(s(n),m)\nonumber\\[2mm]\label{local estimate}&\leq&
\frac{m}{s(n)}(\log C_0+\log
L_0)+\frac{1}{n^{1/2}}\cdot\frac{1}{n^{1/2}}\log\mu(s(n),m).\end{eqnarray}
Noting that  $\delta_n=(L_0^nn^2)^{-1}$ is decreasing in $n$,
we  can define $n(\delta):=\max\{k: \delta_k \geq \delta\}$ for $\delta\in (0,\delta_1]$. Hence, $s=s(n(\delta))$ can be
considered as a function  with variable $\delta$. Since
 $L_0>1$,
$$\frac1n\leq \frac{\log (e^2L_0)}{-\log \delta_n}.$$ Define $a:
(0,+\infty)\to [0,+\infty)$ as follows
\begin{equation*}a(t)=\begin{cases}1,\quad &t>\delta_1;\\[2mm]
\frac{1}{s(n)}+\frac{1}{(-\log{\delta_n})^{1/2}},\quad&
\delta_{n+1}< t\leq \delta_n.\end{cases}
\end{equation*}

\begin{Lem}\label{small expanding}For any analytic map
$f$ of $M$ with $\dim M=m$ or $m+1$, and $L(f)< L_0$,  we have
\begin{eqnarray*} h^{*}(f,\delta)&\leq&
 C(f) a(\delta),\end{eqnarray*}
where $C(f)$ is a constant depending on $f$.\end{Lem}\begin{proof}In
this case, we can take $N_1\in
 \mathbb{N}$ large enough such that for any $n\geq N_1$, (\ref{large n}) holds for $\rho=\rho(f)$, $M_0=M_0(f)$ and moreover,
$$\delta_n=n^{-2} L_0^{-n}=n^{-2} L(f)^{-n\frac{\log L_0}{\log L(f)}}\leq \rho(f)L(f)^{-n}.$$
We can therefore compute the  tail entropy $h^{*}(f,\delta_n)$ in analytic charts when $n\geq N_1$. Using the estimates (\ref{local estimate}) and (\ref{slow }),
\begin{eqnarray*}
h^{*}(f,\delta_n)&\leq&
\frac{m+1}{s(n)}(\log C_0(f)+\log
L_0)+\frac{1}{n^{1/2}}\cdot\frac{1}{n^{1/2}}\log\mu(s(n),m+1)\nonumber\\[2mm]
&\leq&\frac{m+1}{s(n)}(\log C_0(f)+\log
L_0)+\Big{(}\frac{\log (e^2L_0)}{-\log \delta_n}\Big{)}^{\frac12}\cdot \lambda_0\nonumber\\[2mm]
 &\leq & C_1(f) a(\delta_n),\end{eqnarray*}
  where  $C_0(f)=C_0(m+1,L_0,M_0(f),\rho(f))$ as in Lemma 2.1
and  $C_1(f)=\max\{(m+1)(\log C_0(f)+\log L_0), (\log(e^2L_0))^{\frac12}\cdot \lambda_0\}$. One can see that $C_0(f)$ and $C_1(f)$
  depend only on the complexificaction of $f$. Furthermore,  we take
  $$C_2(f)=\max\{\log L(f),\log L(f)/a(\delta_1), \cdots, \log L(f)/a(\delta_{N_1-1})\}.$$ Writing $C(f)=\max\{C_1(f),C_2(f)\}$,
   we have that for any $\delta\in (\delta_{n+1},\delta_n]$,
   $n\in \mathbb{N}$, $$h^{*}(f,\delta)\leq h^{*}(f,\delta_n)\leq C(f)a(\delta_n)=C(f)a(\delta).$$
   Moreover, when $\delta>\delta_1$,  $h^*(f,\delta)\leq h(f)\leq \log L(f)\leq C(f)$.
\end{proof}

Next we prove Theorem \ref{Thm1} for any real analytic map $f$ on $M$
without restriction on $L(f)$. Consider the product $M_1 =
\mathbb{R}\times M$. There is a simple flow $\phi(t, (s, x)) = (t +
s, x)$ on $ M_1$ called the horizontal flow. We can obtain a
suspension manifold $\widetilde{M}$ by identifying the points $(t +
1, x)$ with $(t, f(x))$. That is, we define an equivalence relation
$\sim$ on $M_1$ by $(t, x)\sim (t_0, x_0) $ iff $t_0 = t + n$ and
$x_0 = f^n(x)$. The quotient space $\widetilde{M} = M_1/\sim$ is
also a real analytic manifold and $\dim \widetilde{M} = \dim M + 1$. The
horizontal flow $\phi$ pushes down to a real  analytic flow $\psi$ on
$\widetilde{M}$. To reduce the proof  to the case that the Lipschitz constant of the complexification is smaller than
  $L_0$, we can choose $i\in \mathbb{N}$ large so
that $\psi^{1/i}$ satisfies
$$L(\psi^{1/i})<L_0.$$
Then by Lemma \ref{small expanding} we have
\begin{eqnarray*}
h^{*}(\psi^{1/i},\delta)&\leq&
 C_2 a(\delta),\quad \forall\,\delta>0,\end{eqnarray*}
where $C_2$ is a constant depending on $\psi^{1/i}$. The time one
map $\psi^1$ of $\psi$ on the section $\{0\}\times M$ is smooth
conjugate to $f$. Therefore,
\begin{eqnarray}\label{estimate 1}
h^{*}(f,\delta)\leq h^{*}(\psi^1,\delta)\leq
ih^{*}(\psi^{1/i},\delta L_0^i)&\leq&
 iC_2 a(\delta L_0^i).\end{eqnarray}
Notice that $\delta_n=n^{-2}L_0^{-n}$, so for $n>i$,
$$\delta_nL_0^i=n^{-2}L_0^{-n+i}\leq \delta_{n-i}.$$
Moreover, $0\leq s(n)-s(n-i)\leq i$ and $\lim_{n\to
\infty}s(n)=\infty$.   Hence  there exists $C_3>0$ such that for $n>i$,
\begin{eqnarray*}\frac{a(\delta_n L_0^i)}{a(\delta_n)}&\leq&
\frac{a(\delta_{n-i})}{a(\delta_n)}= \frac{\frac{1}{s(n-i)}
+\frac{1}{(-\log{\delta_{n-i}})^{1/2}}}{\frac{1}{s(n)}+\frac{1}{(-\log{\delta_n})^{1/2}}}\\[2mm]&\leq&\frac{\frac{1}{s(n)-i}
+\frac{1}{(\log((n-i)^{-2}L_0^{n-i}))^{1/2}}}{\frac{1}{s(n)}+\frac{1}{(\log(n^{-2}L_0^n))^{1/2}}}\\[2mm]
&\leq& C_3. \end{eqnarray*}
Therefore, from (\ref{estimate 1}) we deduce that
\begin{eqnarray*} h^{*}(f,\delta_n)\leq
 iC_2C_3 a(\delta_n),\quad \forall\,n>i.\end{eqnarray*}
Taking $C(f)=\max\{iC_2C_3,\log L(f),\log L(f)/a(1),\cdots, \log L(f)/a(i)\}$, we complete the proof of Theorem  \ref{Thm1}.  Note that from this proof we see that $C(f)$
depends on $i$, $C_2$, $C_3$ and $L(f)$, so it can be chosen locally
uniformly with respect to the complexification topology. \hfill $\Box$

\bigskip


\end{document}